\definecolor{myred}{HTML}{880000}
\definecolor{mygreen}{HTML}{008800}
\definecolor{myblue}{HTML}{000088}
\definecolor{linkblue}{HTML}{0000BB}
\DeclareMathOperator{\tr}{Tr}
\newcommand{\ind}[1]{\bm 1 \{ #1 \}}
\newtheorem{theorem}{Theorem}
\newtheorem*{theorem*}{Theorem}
\newtheorem{corollary}{Corollary}
\theoremstyle{definition}
\newtheorem{remark}{Remark}
\title{A remark on Kashin's discrepancy argument and partial coloring in the Koml\'os conjecture}
\author{
  Afonso S. Bandeira, Antoine Maillard, Nikita Zhivotovskiy\thanks{Department of Mathematics, ETH Z\"{u}rich, Switzerland, \href{}{\{bandeira, antoine.maillard, nikita.zhivotovskii\}@math.ethz.ch}}
}
\begin{document}

\maketitle

\begin{abstract}
In this expository note, we discuss an early partial coloring result of B. Kashin [C. R. Acad. Bulgare Sci., 1985]. Although this result only implies Spencer's six standard deviations [Trans. Amer. Math. Soc., 1985] up to a $\log\log n$ factor, Kashin's argument gives a simple proof of the existence of a constant discrepancy partial coloring in the setup of the Koml\'os conjecture.
\end{abstract}

A seminal result of Spencer \cite{spencer1985six} states that if $x_1, \ldots, x_n$ are vectors in $\{0, 1\}^n$, then there is a vector $\varepsilon \in \{1, -1\}^n$ such that
\begin{equation}
\label{eq:sixstandarddev}
\max\limits_{i \in \{1, \ldots, n\}}|\langle \varepsilon, x_i\rangle| \le 6\sqrt{n}.
\end{equation}
A few years later this result was shown independently by E.\ Gluskin \cite{gluskin1989extremal} who used convex geometry arguments.
In fact, the arguments of Gluskin improve upon earlier papers by B. Kashin \cite{kashin1985isometric, kashin1987trigonometric}, which imply Spencer's bound up to a multiplicative $\log \log n$ factor.

To the best of our knowledge, the work~\cite{kashin1985isometric} has not been translated.
In this expository note, we present this original argument and discuss some of its immediate applications. We provide a slightly more general statement\footnote{Namely, we replace the bound on the individual norms of the vectors by a bound on the sum of these norms squared.} whose proof follows from the arguments in \cite{kashin1985isometric}. 

\begin{theorem}[Kashin's partial coloring theorem \cite{kashin1985isometric}]
\label{thm:kashin}
There is an absolute constant $c > 0$ such that the following holds. Assume that $x_1, \ldots, x_m \in \mathbb{R}^n$ satisfy $\sum\nolimits_{i = 1}^m\|x_i\|_2^2 \le m$. There is a choice $\varepsilon \in \{-1, 0, 1\}^n$ such that $\|\varepsilon\|_{1} \ge n/6$ and 
\[
\max\limits_{i \in \{1, \ldots, m\}}|\langle \varepsilon, x_i\rangle| \le c\sqrt{\frac{m}{n}}~.
\]
\end{theorem}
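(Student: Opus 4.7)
My plan is to follow Kashin's convex-geometric strategy. Set $t := c\sqrt{m/n}$ for a constant $c$ to be chosen, and introduce the centrally symmetric convex polytope
\[
K := \bigl\{y \in \mathbb{R}^n \,:\, |\langle y, x_i\rangle| \le t \text{ for all } i = 1, \ldots, m\bigr\}.
\]
Since any $\varepsilon \in K \cap \{-1,0,1\}^n$ automatically satisfies $\max_i |\langle \varepsilon, x_i\rangle| \le t$, the statement reduces to exhibiting a point of $K \cap \{-1,0,1\}^n$ with at least $n/6$ non-zero coordinates.

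The first step is a cubical volume estimate for $K$. Under the uniform measure on $[-1,1]^n$, each coordinate is independent and bounded by $1$, so $\langle y, x_i\rangle$ is sub-Gaussian with proxy $\|x_i\|_2^2/3$ and satisfies $\mathbb{P}(|\langle y, x_i\rangle| > t) \le 2\exp(-3t^2/(2\|x_i\|_2^2))$. Combining these tail bounds using only the aggregate constraint $\sum_i\|x_i\|_2^2 \le m$ requires care, since a single vector with $\|x_i\|_2 \sim \sqrt m$ can already drive most of the cube outside the corresponding slab. I would handle this by bucketing indices dyadically according to $\|x_i\|_2$---at most $2^{k+1}$ vectors lie in the $k$-th bucket---and summing the resulting geometric series to obtain a usable lower bound on $\mathrm{Vol}(K \cap [-1,1]^n)$.

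The second, Kashin-type step converts this volume estimate into a lattice point with many non-zero coordinates. I would partition $[-1,1]^n$ into the $3^n$ axis-aligned boxes $C_\varepsilon = \prod_j I_{\varepsilon_j}$ indexed by $\varepsilon \in \{-1,0,1\}^n$, with $I_{\pm 1} = [\pm 1/3, \pm 1]$ and $I_0 = [-1/3, 1/3]$, each of volume $(2/3)^n$. A counting/pigeonhole argument combined with the entropy inequality $H_2(1/6) + 1/6 < \log_2 3$ then guarantees that not all $K$-intersecting boxes can have $\|\varepsilon\|_1 < n/6$, so some ``dense'' $\varepsilon$ satisfies $C_\varepsilon \cap K \ne \varnothing$.

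The main obstacle is then the final passage from ``$C_\varepsilon \cap K \ne \varnothing$'' to ``$\varepsilon \in K$''. Naive coordinatewise rounding does not work: a point $y \in K \cap C_\varepsilon$ can differ from $\varepsilon$ by a vector of Euclidean norm up to $\tfrac{2}{3}\sqrt n$, so that $|\langle y-\varepsilon, x_i\rangle|$ may be as large as $\|x_i\|_2 \sqrt n \gg t$. To circumvent this I would exploit the central symmetry of $K$ and average over a well-chosen family of sign-reflected copies of $y$, selecting a combination that lands on a lattice point of $\{-1,0,1\}^n$. This final geometric extraction is the technical core of Kashin's argument and the step I expect to require the most delicate handling.
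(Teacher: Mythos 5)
Your proposal diverges from the paper's argument at both critical steps, and neither divergence currently closes. The paper's volume lower bound for the truncated polytope $E_\lambda = \{z : \|z\|_\infty \le 1,\ \max_i |\langle z, x_i\rangle| \le \lambda^{-1}\sqrt{m/n}\}$ comes from Vaaler's cube-slicing theorem, which gives $\operatorname{vol}(E_\lambda) \ge 1/\sqrt{\det(A^\top A)}$ for an explicit $(m+n)\times n$ matrix $A$; this is compatible with the aggregate hypothesis $\sum_i \|x_i\|_2^2 \le m$ because the determinant is controlled by the trace, which only sees the sum of squared norms. Your sub-Gaussian union bound breaks down precisely here: if a single $\|x_i\|_2$ is of order $\sqrt m$, then $t = c\sqrt{m/n} \ll \|x_i\|_2$ and your estimate $\mathbb{P}(|\langle y,x_i\rangle| > t) \le 2\exp(-3t^2/(2\|x_i\|_2^2))$ is essentially $2$ and carries no information. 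What you would need there is an anti-concentration (small-ball) bound, not a concentration bound, and the dyadic bucketing sketch does not supply one; a concentration-of-measure route naturally requires the uniform hypothesis $\max_i\|x_i\|_2 \le 1$ (Gluskin's setting), not the weaker aggregate one assumed in the theorem.

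The second gap is the one you flag yourself: the pigeonhole over the $3^n$ boxes $C_\varepsilon$ only gives you a box that intersects $K$, not that $\varepsilon$ itself lies in $K$, and no averaging or sign-reflection trick is going to bridge this in general since a point of $C_\varepsilon$ can sit a distance of order $\sqrt n$ from the center $\varepsilon$, far past the slab width $t$. The paper sidesteps the rounding problem entirely by invoking Minkowski's convex body theorem on the dilated set $(2-\delta)E_\lambda$: since $E_\lambda$ is convex, centrally symmetric, and has volume exceeding $2^n(1-\delta)^n$, Minkowski produces more than $2(2-\delta)^n(1-\delta)^n$ nonzero integer points inside $(2-\delta)E_\lambda$, and the cube constraint $\|z\|_\infty \le 1$ built into $E_\lambda$ forces these to lie in $\{-1,0,1\}^n$. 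The counting against the at most $(12e)^{n/6}$ points of $\{-1,0,1\}^n$ with $\|\varepsilon\|_1 \le n/6$ then only has to ensure the support-size requirement. In short, the lattice point comes for free from Minkowski, not from a rounding step; swapping in Vaaler's theorem and Minkowski's theorem is not a patch to your outline but replaces both of its stages.
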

In comparison, the classic partial coloring result of Gluskin \cite[Theorem 3]{gluskin1989extremal}, which implies Spencer's result \eqref{eq:sixstandarddev}, requires as stated $\max\nolimits_{i \in \{1, \ldots, m\}}\|x_i\|_2 \le 1$, instead of $\sum\nolimits_{i = 1}^m\|x_i\|_2^2 \le m$ required in Theorem \ref{thm:kashin}. Under this slightly stronger assumption, Gluskin's result claims the existence of $\varepsilon \in \{-1, 0, 1\}^n$ such that $\|\varepsilon\|_{1} \ge c_1n$ for some constant $c_1 \in (0, 1)$ and $\max\nolimits_{i \in \{1, \ldots, m\}}|\langle \varepsilon, x_i\rangle| \le c_2\sqrt{1 + \log \left(m/n\right)}$ for some constant $c_2 > 0$. Despite the worse dependence on $m/n$ in Theorem \ref{thm:kashin}, the partial coloring result of Theorem \ref{thm:kashin} implies \eqref{eq:sixstandarddev} up to a multiplicative $\log\log n$ factor.

\begin{corollary}[A full coloring argument of Kashin, Proposition 2 in \cite{kashin1987trigonometric}] There is an absolute constant $c > 0 $ such that the following holds. Assume that $x_1, \ldots, x_n \in \mathbb{R}^n$ each satisfy $\|x_i\|_{\infty} \le 1$. There is $\varepsilon \in \{-1, 1\}^n$ such that 
\[
\max\limits_{i \in \{1, \ldots, n\}}|\langle \varepsilon, x_i\rangle| \le c\sqrt{n}\log\log n~.
\]
\end{corollary}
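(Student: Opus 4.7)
The plan is to iterate Theorem~\ref{thm:kashin}: each round will color at least one-sixth of the currently free coordinates while incurring discrepancy $O(\sqrt{n})$, and a random $\pm1$ assignment will close out the small set of coordinates left free after $\Theta(\log\log n)$ rounds. The $\log\log n$ loss over Spencer's bound appears precisely because each round of Theorem~\ref{thm:kashin} costs a full $\sqrt{n}$ (not a geometrically decreasing quantity, as in the Gluskin/Spencer iteration), so only that many rounds can be afforded.

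Concretely, set $S_0 = \{1, \ldots, n\}$ and $\sigma^{(0)} = 0$, and iterate as follows. At round $j$ write $n_j = |S_j|$ and consider the restrictions $x_i|_{S_j} \in \mathbb{R}^{n_j}$. Since $\|x_i\|_\infty \le 1$, we have $\|x_i|_{S_j}\|_2^2 \le n_j$, so the rescalings $y_i := x_i|_{S_j}/\sqrt{n_j}$ satisfy $\sum_{i=1}^n \|y_i\|_2^2 \le n$. Applying Theorem~\ref{thm:kashin} in $\mathbb{R}^{n_j}$ with $m = n$ then yields $\tau^{(j)} \in \{-1,0,1\}^{S_j}$ with $\|\tau^{(j)}\|_1 \ge n_j/6$ and $|\langle \tau^{(j)}, y_i\rangle| \le c\sqrt{n/n_j}$, hence $|\langle \tau^{(j)}, x_i|_{S_j}\rangle| \le c\sqrt{n}$. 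Set $\sigma^{(j+1)} = \sigma^{(j)} + \tau^{(j)}$ and $S_{j+1} = S_j \setminus \mathrm{supp}(\tau^{(j)})$; since the supports of the $\tau^{(j)}$'s are pairwise disjoint, one has $\sigma^{(j+1)} \in \{-1,0,1\}^n$ and $n_{j+1} \le (5/6)\,n_j$, so $n_k \le (5/6)^k n$.

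After $k$ rounds the triangle inequality gives $|\langle \sigma^{(k)}, x_i\rangle| \le c k \sqrt{n}$. I would complete $\sigma^{(k)}$ to a full coloring $\varepsilon = \sigma^{(k)} + \eta \in \{-1,1\}^n$ by choosing $\eta \in \{-1,1\}^{S_k}$ (extended by $0$ outside $S_k$). Since $\|x_i|_{S_k}\|_2^2 \le n_k$, Hoeffding's inequality together with a union bound over $i \in \{1,\ldots,n\}$ produce some $\eta$ for which $|\langle \eta, x_i|_{S_k}\rangle| \le C\sqrt{n_k \log n}$ for every $i$. Choosing $k = \lceil \log_{6/5}(\log n) \rceil = \Theta(\log\log n)$ makes $n_k \le n/\log n$, and combining the two contributions gives
\[
\max_{i} |\langle \varepsilon, x_i\rangle| \;\le\; c k \sqrt{n} + C\sqrt{n_k \log n} \;=\; O\bigl(\sqrt{n}\,\log\log n\bigr),
\]
as required.

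The main obstacle is the balance between the two error sources: each application of Theorem~\ref{thm:kashin} costs a flat $\sqrt{n}$ in discrepancy, so no more than $\Theta(\log\log n)$ rounds can be used, yet one must run that many rounds to shrink $n_k$ enough that the final Hoeffding contribution is $O(\sqrt{n})$. Once this tradeoff is identified and $k$ is picked as above, the remaining work is just bookkeeping: checking that the partial-coloring supports stay disjoint so that $\sigma^{(k)} \in \{-1,0,1\}^n$, and that the completion by $\eta$ indeed produces a $\pm1$ vector.
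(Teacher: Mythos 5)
Your proposal is correct and follows essentially the same route as the paper: iterate Theorem~\ref{thm:kashin} on the shrinking set of uncolored coordinates, noting that each round costs a flat $c\sqrt{n}$ in discrepancy while removing at least a sixth of the free coordinates, run $\Theta(\log\log n)$ rounds so that at most $n/\log n$ coordinates remain, and finish with a random sign assignment controlled by Hoeffding plus a union bound. The bookkeeping (disjoint supports, $n_{j+1}\le (5/6)n_j$, the choice $k=\lceil\log_{6/5}\log n\rceil$) matches the paper's argument.
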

\begin{proof}
We repeat the proof of Proposition 2 in \cite{kashin1987trigonometric}, whose translated version is available,  for the sake of completeness. Applying Theorem~\ref{thm:kashin} to $\{x_i/\sqrt{n}\}_{i = 1}^n$, we get a partial coloring $\varepsilon \in \{-1,0,1\}^n$ with at most $(5/6)n$ zeros satisfying
$\max_{i \in [n]}|\langle \varepsilon, x_i\rangle| \leq c \sqrt{n}$.
Let $S = \{j \in [n]: \varepsilon_j = 0\}.$ Denoting $n_1 = |S|$, we have $n_1 \leq (5/6)n$.
When we restrict the set of coordinates of $\{x_i\}_{i = 1}^n$ to $S$, we get vectors $x_i^{(1)} \in \mathbb{R}^{n_1}$, still satisfying $\|x_i^{(1)}\|_\infty \leq 1$.
We can thus apply Theorem~\ref{thm:kashin} recursively to $\{x_i^{(1)}/\sqrt{n_1}\}_{i = 1}^n$: at step $k$, we work with a set $S_k$ such that $n_k = |S_k| \leq (5/6)^k n$, and with
corresponding vectors $x_i^{(k)} \in \mathbb{R}^{n_k}$ for $i = 1, \ldots, n$.
After $s$ steps, we have at most $(5/6)^s n$ zero remaining coordinates in the coloring, and a total discrepancy (by the triangle inequality) of at most $cs \sqrt{n}$.
We set $s = \lceil\log \log n / \log (6/5)\rceil$, so that $n_s = n(5/6)^s \leq n / \log n$.
For the remaining $n_s$ coordinates we use a random value of $\varepsilon^{(s)} \in \{\pm 1\}^{n_s}$.
By the standard Hoeffding's inequality and union bound argument, the discrepancy of this random coloring is
\begin{align}
  \max_{i \in [n]} |\langle \varepsilon^{(s)}, x_i^{(s)}\rangle| \leq C \sqrt{\log n}
  \max_{i \in [n]} \|x_i^{(s)}\|_2 \leq C \sqrt{n_s \log n} \leq C \sqrt{n},
\end{align}
for a universal constant $C > 0$. Therefore, by the triangle inequality, the total discrepancy 
of the resulting coloring $\varepsilon \in \{\pm 1\}^n$ is 
\begin{align}
  \max_{i \in [n]} |\langle \varepsilon, x_i\rangle| \leq c \sqrt{n} \lceil \log \log n / \log (6/5) \rceil + C \sqrt{n} \leq C_2 \sqrt{n} \log \log n, 
\end{align}
for some universal constant $C_2 > 0$. The claim follows.
\end{proof}

A previously unnoticed\footnote{After this note was made public, the authors were made aware that a similar argument, based on Vaaler's theorem, was used by Y. Lonke to prove the result of Corollary \ref{cor:partialkomlos}. This proof was presented in the PhD dissertation of Y. Lonke (1998, Hebrew University, in Hebrew) \cite{lonkephd}. Remarkably, the argument of Y. Lonke is independent of B. Kashin's earlier application of Vaaler's Theorem.} corollary of Theorem \ref{thm:kashin} is the following partial coloring result. This result appears in the work of Spencer \cite[Theorem 16]{spencer1985six}, though its proof there is quite involved.

\begin{corollary}[Koml\'os partial coloring]
\label{cor:partialkomlos}
There is a constant $K > 0 $ such that the following holds. Assume that $u_1, \ldots, u_n \in \mathbb{R}^m$ satisfy $\max\nolimits_{i \in \{1, \ldots, n\}}\|u_i\|_2 \le 1$. There is $\varepsilon \in \{-1, 0, 1\}^n$ such that $\|\varepsilon\|_{1} \ge n/6$ and 
\[
\left\|\sum\nolimits_{i = 1}^n\varepsilon_i u_i\right\|_{\infty} \le K.
\]
\end{corollary}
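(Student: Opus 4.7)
The plan is to apply Theorem~\ref{thm:kashin} directly to a rescaled version of the rows of the matrix whose columns are $u_1, \ldots, u_n$. The key observation is that the $\sqrt{m/n}$ factor appearing in Kashin's partial coloring bound is exactly what can be absorbed in this setup, thanks to a Frobenius-norm identity.

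First, I would form the $m \times n$ matrix $U = [u_1 \mid \cdots \mid u_n]$ and denote its rows by $r_1, \ldots, r_m \in \mathbb{R}^n$. Since the coordinates of $\sum_i \varepsilon_i u_i$ are precisely the inner products $\langle \varepsilon, r_j\rangle$, controlling the $\ell_\infty$ norm reduces to bounding $\max_j |\langle \varepsilon, r_j\rangle|$. The hypothesis $\max_i \|u_i\|_2 \le 1$ then yields the Frobenius identity
\[
  \sum_{j=1}^m \|r_j\|_2^2 \;=\; \sum_{i=1}^n \|u_i\|_2^2 \;\le\; n.
\]

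Next, to match the normalization required by Theorem~\ref{thm:kashin}, I would set $x_j := \sqrt{m/n}\, r_j$ for $j = 1, \ldots, m$. These are $m$ vectors in $\mathbb{R}^n$ satisfying $\sum_{j=1}^m \|x_j\|_2^2 \le (m/n) \cdot n = m$. Applying Theorem~\ref{thm:kashin} then produces $\varepsilon \in \{-1, 0, 1\}^n$ with $\|\varepsilon\|_1 \ge n/6$ and $\max_j |\langle \varepsilon, x_j\rangle| \le c\sqrt{m/n}$. Dividing through by $\sqrt{m/n}$ converts this bound back to the rows and gives $\max_j |\langle \varepsilon, r_j\rangle| \le c$, which is exactly the desired inequality with $K = c$.

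There is no real obstacle here: the argument is a single rescaling combined with one application of Theorem~\ref{thm:kashin}. What deserves emphasis is that the slightly strengthened sum-of-squares hypothesis in Theorem~\ref{thm:kashin}, as opposed to the uniform-norm hypothesis $\max_i \|x_i\|_2 \le 1$ used in Gluskin's partial coloring, is precisely what makes this reduction work: the Frobenius identity controls only $\sum_j \|r_j\|_2^2$, not $\max_j \|r_j\|_2$, so Gluskin's stronger hypothesis would not be available on the row side. This also explains why the seemingly suboptimal $\sqrt{m/n}$ factor in Kashin's bound is in fact sharp enough for the Komlós application.
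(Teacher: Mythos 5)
Your proposal is correct and follows essentially the same route as the paper: you pass to the rows $r_j$ of the matrix $U$ (the paper's $a_j$), use the Frobenius identity $\sum_j\|r_j\|_2^2 = \sum_i\|u_i\|_2^2 \le n$, rescale by $\sqrt{m/n}$, and apply Theorem~\ref{thm:kashin}. Your closing remark about why the sum-of-squares hypothesis (rather than a uniform bound on $\|x_i\|_2$) is what makes this transpose trick work is exactly the point the paper emphasizes in its discussion comparing Kashin's and Gluskin's statements.
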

\begin{proof}
Consider the vectors $a_{1}, \ldots, a_m \in \mathbb{R}^n$ defined by $a_j = (u_1^{(j)}, \ldots, u_n^{(j)})$ for $j = 1, \ldots, m$, where $u_k^{(j)}$ denotes the $j$-th coordinate of $u_k$.
We have $\sum\limits_{j = 1}^m \|a_j\|_2^2 = \sum\limits_{i = 1}^n \|u_i\|_2^2 \le n$, or equivalently $\sum\limits_{j = 1}^m \left\|a_j\sqrt{\frac{m}{n}}\right\|_2^2 \le m$.
Applying Theorem \ref{thm:kashin} to $\left\{a_j\sqrt{\frac{m}{n}}\right\}_{j = 1}^m$, we obtain for some $\varepsilon \in \{-1, 0, 1\}^n$ with $\|\varepsilon\|_{1} \ge n/6$:
\[
\left\|\sum\nolimits_{i = 1}^n\varepsilon_i u_i\right\|_{\infty} = \max\limits_{j \in \{1, \ldots, m\}}|\langle \varepsilon, a_j\rangle| = \sqrt{\frac{n}{m}}\max\limits_{j \in \{1, \ldots, m\}}\left|\left\langle \varepsilon, a_j\sqrt{\frac{m}{n}}\right\rangle\right| \le K.
\]
\end{proof}
The Koml\'{o}s conjecture asks if the result of Corollary \ref{cor:partialkomlos} holds for some full coloring $\varepsilon \in \{-1,1\}^n$ instead.
A different proof of Corollary \ref{cor:partialkomlos}, based on the ideas of Gluskin, appears in the work of Giannopoulos \cite{giannopoulos1997some}. Recent results on algorithmic discrepancy imply a similar result. However, instead of the \emph{true partial coloring} taking its values in $\{-1, 0, 1\}$, one usually considers \emph{fractional colorings} where the colors take values in $[-1, 1]$, though many of the colors are close to $\{-1, 1\}$ (see e.g., \cite{lovett2015constructive}). Regarding the fractional coloring results, we note that an earlier version of Theorem \ref{thm:kashin}, where $\varepsilon \in [-1, 1]^n$ and $\|\varepsilon\|_{1} \ge cn$ for some constant $c \in (0, 1)$ appears in \cite{Kashin1981} (see also \cite{gluskin1989extremal}).

\begin{remark}[Partial coloring iterations for Koml\'os]
We note that, for any $s$, iterating Corollary~\ref{cor:partialkomlos} implies a partial coloring with at most $n (5/6)^s$ zeros and discrepancy $K s$. For example, with $s$ scaling as $\log\log n$ this yields a partial coloring with only a $O(1/\log(n))$ fraction of zeros and discrepancy $O(\log \log n)$ and with $s$ scaling as $\log n$ one can get a full coloring with discrepancy $O(\log(n))$.
\end{remark}

We note that in the setup of Koml\'os, partial colorings are only known to imply the discrepancy bound $O(\log n)$,
whereas the best known discrepancy bound due to Banaszczyk \cite{banaszczyk1998balancing} scales as $O(\sqrt{\log n})$. The remainder of the note is devoted to the proof of Theorem \ref{thm:kashin}. 

\begin{theorem}[Vaaler's theorem \cite{vaaler1979geometric}]
\label{thm:vaalera}
Let $Q_d = [-1/2, 1/2]^d$, and let $A \in \mathbb{R}^{d \times n}$ satisfy $\operatorname{rank}(A) = n$. Then, the following inequality holds
\[
\frac{1}{\sqrt{\det(A^{\top}A)}} \le \int\nolimits_{\mathbb{R}^n}\ind{Ax \in Q_d}dx~.
\]
\end{theorem}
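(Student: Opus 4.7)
The plan is to reduce the inequality to the classical \emph{cube-slicing} statement that every $n$-dimensional central section of the cube $Q_d$ has $n$-dimensional volume at least $1$, and then prove the cube-slicing bound via Fourier analysis, which is the heart of Vaaler's argument.

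For the reduction, I would take the QR factorization $A = UR$ with $U \in \mathbb{R}^{d \times n}$ having orthonormal columns and $R \in \mathbb{R}^{n \times n}$ invertible. Since $A^\top A = R^\top R$, we have $\sqrt{\det(A^\top A)} = |\det R|$, and the substitution $y = Rx$ transforms the integral into
\begin{equation}
\int_{\mathbb{R}^n} \mathbf{1}\{Ax \in Q_d\} \, dx = \frac{1}{|\det R|}\int_{\mathbb{R}^n} \mathbf{1}\{Uy \in Q_d\} \, dy = \frac{\mathrm{vol}_n(Q_d \cap V)}{\sqrt{\det(A^\top A)}},
\end{equation}
where $V = \mathrm{range}(A)$ is an $n$-dimensional subspace of $\mathbb{R}^d$ and we used that $y \mapsto Uy$ is an isometric bijection of $\mathbb{R}^n$ onto $V$. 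The problem thus reduces to showing $\mathrm{vol}_n(Q_d \cap V) \ge 1$ for every $n$-dimensional subspace $V$.

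For the cube-slicing bound, the Fourier slice theorem supplies the exact identity
\begin{equation}
\mathrm{vol}_n(Q_d \cap V) = \int_{V^\perp} \widehat{\mathbf{1}_{Q_d}}(\eta) \, d\eta = \int_{V^\perp} \prod_{j=1}^d \mathrm{sinc}(\pi \eta_j) \, d\eta,
\end{equation}
so the task is to bound this oscillatory integral below by $1$. Equivalently, after parametrizing $V$ by an orthonormal basis and letting $a_1, \ldots, a_d \in \mathbb{R}^n$ be the coordinates of the standard basis vectors of $\mathbb{R}^d$, one has the isotropy identity $\sum_j a_j a_j^\top = I_n$ and $\mathrm{vol}_n(Q_d \cap V) = \int_{\mathbb{R}^n} \prod_j \mathbf{1}_{[-1/2,1/2]}(\langle a_j, y\rangle) \, dy$. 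The approach I would follow is Vaaler's own: invoke an entire minorant $L$ of $\mathbf{1}_{[-1/2,1/2]}$ of exponential type $2\pi$ produced by the Beurling--Selberg extremal function machinery, replace the indicators by $L$ to obtain a lower bound, and then use Fourier inversion together with the isotropy condition on the $a_j$ to evaluate the resulting expression as exactly $1$.

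The principal obstacle is the sharp construction of the extremal minorant $L$ with the correct normalization, together with the Fourier book-keeping that yields precisely the constant $1$ rather than a smaller number. This one-sided extremal approximation problem is classical but technically subtle, going back to Beurling and Selberg and adapted by Vaaler to the characteristic function of an interval; once the sharp minorant is in hand, combining the Fourier inversion with the isotropy identity $\sum_j a_j a_j^\top = I_n$ completes the cube-slicing inequality and hence the theorem by the reduction above.
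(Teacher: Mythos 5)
Your reduction is clean and correct: factoring $A = UR$ with $U^\top U = I_n$ gives $\sqrt{\det(A^\top A)} = |\det R|$, and the substitution $y = Rx$ together with the isometry $y \mapsto Uy$ onto $V = \mathrm{range}(A)$ yields
\begin{equation}
\int_{\mathbb{R}^n} \ind{Ax \in Q_d}\,dx = \frac{\mathrm{vol}_n(Q_d \cap V)}{\sqrt{\det(A^\top A)}},
\end{equation}
so the theorem is indeed equivalent to the cube-slicing lower bound $\mathrm{vol}_n(Q_d \cap V) \ge 1$, and the Fourier--slice identity $\mathrm{vol}_n(Q_d \cap V) = \int_{V^\perp}\prod_j \mathrm{sinc}(\eta_j)\,d\eta$ is also correct.

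The gap is in the proposed proof of the slicing bound itself. A Beurling--Selberg minorant $L$ of $\mathbf{1}_{[-1/2,1/2]}$ of finite exponential type is an entire function and therefore cannot vanish identically off $[-1/2,1/2]$; since $L \le \mathbf{1}_{[-1/2,1/2]}$, it must take strictly negative values outside the interval. Consequently the product $\prod_{j=1}^d L(\langle a_j,y\rangle)$ is \emph{not} pointwise dominated by $\prod_j \mathbf{1}_{[-1/2,1/2]}(\langle a_j,y\rangle)$: at a point $y$ where an even number of factors are negative, the product of minorants is positive while the product of indicators is zero. So the step ``replace the indicators by $L$ to obtain a lower bound'' fails, and there is no Fourier book-keeping that rescues it. (Worse, the one-variable Selberg minorant of type $2\pi$ for $[-1/2,1/2]$ has $\int L = 0$, so even if one could justify the exchange, the normalization would not come out to $1$.) You appear to have conflated Vaaler's geometric inequality paper \cite{vaaler1979geometric} with his separate work on extremal functions in Fourier analysis; the latter is the home of the Beurling--Selberg machinery, but it is not what proves the slicing bound.

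The actual argument, which the paper's surrounding discussion sketches, is the \emph{peaking} comparison and uses no extremal functions. One shows that the uniform probability measure on $[-1/2,1/2]$ is more peaked than the Gaussian with density $e^{-\pi t^2}$ (trivial in one dimension), that peakedness is preserved under taking products of independent symmetric unimodal measures (the nontrivial step, going through a Brunn--Minkowski/log-concavity argument), and that it is preserved under linear images, hence under orthogonal projection onto $V^\perp$. The projected density of the uniform measure on $Q_d$ onto $V^\perp$ at the origin is exactly $\mathrm{vol}_n(Q_d \cap V)$, while the projected Gaussian is again the standard $e^{-\pi\|\cdot\|_2^2}$ Gaussian on $V^\perp$ with density $1$ at the origin; comparing densities at $0$ (by shrinking balls) gives $\mathrm{vol}_n(Q_d \cap V) \ge 1$. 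Replacing your third paragraph with this peaking argument would make the proof correct and would also match the cited source.
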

Let us provide some intuition behind this result. Let $P_{U}$ be a uniform distribution on $[-1/2, 1/2]$, and $P_{G}$ be a Gaussian measure with density $\exp(-\pi x^2)$. It is clear that for any closed, convex, symmetric set $A \subseteq [-1/2, 1/2]$, it holds that $P_{G}(A) \le P_U(A)$. In this case, we say that the measure $P_U$ is \emph{more peaked} than $P_G$. The proof of Theorem \ref{thm:vaalera} first shows that the uniform measure in $[-1/2, 1/2]^d$ is more peaked than the Gaussian measure with density $\exp(-\pi\|x\|_2^2)$. The proof is then generalized to an arbitrary dimension $d$, so that the final statement is essentially a comparison between the Gaussian measure and the volume of the convex polytope induced by $A$. Lower bounding the Gaussian measure has continued to be a fundamental approach in modern discrepancy. 

For $\lambda > 0$ consider the set 
\[
E_{\lambda} = \left\{z \in \mathbb{R}^n: \|z\|_{\infty} \le 1 \quad \textrm{and} \quad \max\limits_{i \in \{1, \ldots, m\}}|\langle z, x_i\rangle| \le \frac{1}{\lambda}\sqrt{\frac{m}{n}}\right\}~.
\]
We want to lower bound the volume of this set using the bound of Theorem \ref{thm:vaalera}. Let $A$ in this theorem be defined as follows. Set $d = m + n$. The first $m$ rows of $A$ are $\frac{\lambda x_i}{2}\sqrt{\frac{n}{m}}$, where $i = 1, \ldots, m$. The remaining $n$ rows are the normalized standard basis vectors $e_i/2$, where $i = 1, \ldots, n$. Clearly, the rank of $A$ is equal to $n$. It is straightforward to see that $\ind{Az \in Q_d} = \ind{z \in E_{\lambda}}$ for all $z \in \mathbb{R}^n$. Therefore,
\[
\int\nolimits_{\mathbb{R}^n}\ind{Ax \in Q_d}dx = \operatorname{vol}(E_{\lambda})~.
\]
Observe that $A^{\top}A \in \mathbb{R}^{n \times n}$ is positive semi-definite, and by the standard upper bound we have
\[
\det(A^{\top}A) \le \left(\frac{\tr(A^{\top}A)}{n}\right)^{n} = \left(\frac{\tr(AA^{\top})}{n}\right)^{n} = \left(\sum\limits_{i = 1}^{m} \frac{\lambda^2 \|x_i\|_2^2}{4}\frac{1}{m} + \frac{1}{4}\right)^{n} \le \left(\frac{\lambda^2}{4} + \frac{1}{4}\right)^{n}~.
\]
Fix $\delta = 0.01$ and choose $\lambda > 0$ small enough so that $(1+\lambda^2)^{-n/2} > (1 - \delta)^n$~. By Theorem \ref{thm:vaalera} we have
\[
2^n(1 - \delta)^n < \frac{1}{\sqrt{\det(A^{\top}A)}} \le \operatorname{vol}(E_{\lambda})~.
\]
In particular, for the set $(2 - \delta)E_{\lambda}$, we have $\operatorname{vol}((2 - \delta)E_{\lambda}) > 2^n(2 - \delta)^{n}(1 - \delta)^n$. This set is also convex, and symmetric around the origin. By Minkowski's convex body theorem \cite[Chapter 2, Ex. 1]{matousek2013lectures}, we have that the set $(2 - \delta)E_{\lambda}$ contains at least $2(2 - \delta)^n(1 - \delta)^n$ non-zero points with integer coordinates. By construction, all these integer points belong to $\{-1, 0, 1\}^n$. Finally, the size of the set $\{x \in \{-1, 0, 1\}^n: \|x\|_{1} \le n/6\}$ is 
\[
\sum\nolimits_{r = 0}^{\lfloor n/6\rfloor}\binom{n}{r}2^r \le 2^{\lfloor n/6\rfloor}\sum\nolimits_{r = 0}^{\lfloor n/6\rfloor}\binom{n}{r} \le  \left(12e\right)^{n/6}~.
\]
We check that for our choice of $\delta$, for all $n$ it holds that $\left(12e\right)^{n/6} \le 2(2 - \delta)^n(1 - \delta)^n$. Thus, by the counting argument, there is at least one $\varepsilon \in \{-1, 0, 1\}^n \cap (2-\delta)E_{\lambda}$ such that $\|\varepsilon\|_1 \ge n/6$. The claim of Theorem \ref{thm:kashin} follows.\qed

\paragraph{Acknowledgments.} The authors are thankful to Dylan Altschuler for encouraging us to make this note available, and to Boris Kashin and Yossi Lonke for valuable feedback.

\bibliographystyle{plainurl}
{\footnotesize
\bibliography{mybib}
}
\end{document}